\documentclass{article} % Use the article class for a standard document

\usepackage[dvipsnames,svgnames,table]{xcolor}

\usepackage[colorinlistoftodos,textsize=footnotesize,backgroundcolor=yellow!40,bordercolor=red,linecolor=red]{todonotes}

\usepackage{titling}   % For customizing title, author, and date formatting
\usepackage{titlesec}  % For modifying section titles

\usepackage{mathtools} % Enhances amsmath with additional features and fixes
\usepackage[skins,theorems]{tcolorbox} % For creating colored boxes (e.g., for cases, examples)
\usepackage{amsmath}   % Core package for advanced math typesetting
\usepackage{amsthm}    % Provides enhanced theorem-like environments
\usepackage{amssymb}   % Additional math symbols

\usepackage[a4paper, margin=3cm]{geometry}

\usepackage{tikz}

\usepackage{lineno}
\mathtoolsset{showonlyrefs}

\newtheorem{theorem}{Theorem} % Defines a theorem environment
[section] % Numbering of Theorem (can be modified to [section] if needed)
\newtheorem{lemma}[theorem]{Lemma} % Lemmas share the numbering with Theorems
\newtheorem{question}[theorem]{Question} % Lemmas share the numbering with Theorems
\newtheorem{proposition}[theorem]{Proposition} % Propositions share numbering with Theorems
\theoremstyle{definition} % Switch to a definition style (upright text)
\newtheorem{remark}[theorem]{Remark} % Remarks share numbering with Theorems

\newcommand{\R}{\mathbb{R}}    % Real numbers
\DeclareMathOperator*{\Div}{div}

\newcommand\norm[1]{\left\Arrowvert#1\right\Arrowvert} % Norm with double arrows
\newcommand\abs[1]{\left\vert#1\right\vert} % Absolute value

\usepackage[colorlinks=true, linkcolor=black, urlcolor=black, citecolor=DarkBlue, filecolor=black]{hyperref} % Enables clickable links in the document

\numberwithin{equation}{section}

\pretitle{\begin{center}\sffamily\huge}    % Before title: Center and set font size and style
\posttitle{\par\end{center}}               % End title formatting
\preauthor{\begin{center}\large\sffamily}   % Before author: Center and set style
\postauthor{\par\end{center}}              % End author formatting
\predate{\begin{center}\large\itshape\sffamily} % Before date: Center, italicize, and set style
\postdate{\par\end{center}}                % End date formatting

\usepackage{tocloft} % Allows customization of the Table of Contents
\title{A Discontinuous Solution of the Critical n-Laplace System with Antisymmetric Potential}          % Change to your document's title
\author{Dominik Schlagenhauf \thanks{Department of Mathematics, ETH Zentrum,
CH-8092 Z\"urich, Switzerland.}} % Change to your name or the author's name
\date{\today}                % Automatically inserts today's date

\begin{document}

\maketitle % Prints the title, author, and date as defined above
%\vspace{50mm} % Uncomment and adjust if additional space is needed after the title

\begin{abstract}
%%%%%%%%%%%% CHANGE ABSTRACT HERE %%%%%%%%%%%%
%%%%%%%%%%%% ↓ ↓ ↓ ↓ ↓ ↓ ↓ ↓ ↓ ↓  %%%%%%%%%%%%
%%%%%%%%%%%% ↓ ↓ ↓ ↓ ↓ ↓ ↓ ↓ ↓ ↓  %%%%%%%%%%%%
Let \(n>2\).  We construct a map
\(U\in W^{1,n}(B^n,\R^{n+2})\) that is discontinuous at the origin and smooth on the punctured ball $B^n \setminus \{0\}$, together with an antisymmetric potential
\[
 \Omega\in L^n\bigl(B^n,so(n+2)\otimes\R^n\bigr)
\]
such that
\[
 -\Div\bigl(|\nabla U|^{n-2}\nabla U\bigr)
 =\Omega\cdot |\nabla U|^{n-2}\nabla U
 \qquad\text{in }\mathcal D'(B^n).
\]
This gives a negative answer to a regularity question posed by Rivi\`ere in
\cite[Eq.~(3.23)]{RiviereSurvey}, and later reformulated in the open Problem 2.5 in the survey paper
\cite{SchikorraStrzelecki}.
Our potential admits the Lorentz-space regularity $\Omega \in \cap_{q>2}L^{(n,q)} \setminus L^{(n,2)}$.
In addition for given $1<p<\infty$ we can enforce $\nabla U \in L^{(n,p)}$ but $\nabla U \notin L^{(n,1)}$.
The construction does not give a counterexample to regularity for weakly \(n\)-harmonic maps or for
higher-dimensional \(H\)-systems. \\
The example was generated by ChatGPT 5.6 Sol. The work itself was written by the author and thoroughly reviewed to ensure its correctness. 
\end{abstract}
%\vspace{15mm} % Uncomment and adjust if additional vertical space is desired after the abstract

%%%%%%%%%%%% Generate the Table of Contents %%%%%%%%%%%%
\begin{comment}
\tableofcontents
\vspace{10mm} % Adds extra vertical space after the TOC
\end{comment}
%\newpage  % Changes to a new page before starting the document

%%%%%%%%%%%% BEGIN MAIN DOCUMENT HERE %%%%%%%%%%%%
%%%%%%%%%%%% ↓ ↓ ↓ ↓ ↓ ↓ ↓ ↓ ↓ ↓ ↓ ↓  %%%%%%%%%%%%
%%%%%%%%%%%% ↓ ↓ ↓ ↓ ↓ ↓ ↓ ↓ ↓ ↓ ↓ ↓  %%%%%%%%%%%%

\section{Introduction}
%%%%%%%%%%%%%%%%%% INTODUCTION %%%%%%%%%%%%%%%%%%%
%%%%%%%%%%%% ↓ ↓ ↓ ↓ ↓ ↓ ↓ ↓ ↓ ↓ ↓ ↓  %%%%%%%%%%%%
%%%%%%%%%%%% ↓ ↓ ↓ ↓ ↓ ↓ ↓ ↓ ↓ ↓ ↓ ↓  %%%%%%%%%%%%
%%%%%%%%%%%% ↓ ↓ ↓ ↓ ↓ ↓ ↓ ↓ ↓ ↓ ↓ ↓  %%%%%%%%%%%%
%%%%%%%%%%%% ↓ ↓ ↓ ↓ ↓ ↓ ↓ ↓ ↓ ↓ ↓ ↓  %%%%%%%%%%%%

Harmonic maps on surfaces have a central role in the field of geometric analysis as they are the critical points of the Dirichlet energy.
In the groundbreaking work \cite{Riviere2007} Rivi\`ere proved that these are continuous by rewriting the harmonic map equation and showing that solutions $u\in W^{1,2}(D^2,\R^m)$ of
\begin{equation}
-\Delta u = \Omega \cdot \nabla u
\end{equation}
are continuous,
where $\Omega \in L^2(D^2, so(m)\otimes \R^2)$ is an antisymmetric potential $1$-form and $D^2$ the unit disk.
A natural higher-dimensional analogue is obtained by replacing the
Laplacian by the conformally invariant \(n\)-Laplacian.
Let \(B^n\subset\R^n\) be the unit ball, \(n>2\), and consider
\begin{equation}\label{eq:systemwithn}
 -\Div\bigl(|\nabla u|^{n-2}\nabla u\bigr)
 =\Omega\cdot |\nabla u|^{n-2}\nabla u,
 \qquad
 u\in W^{1,n}(B^n,\R^m),
\end{equation}
where
\begin{equation}
 \Omega\in L^n(B^n,so(m)\otimes\R^n).
\end{equation}
Thus, \(\Omega=(\Omega_\alpha)_{\alpha=1}^n\) is an \(L^n\) matrix-valued
vector field, with each \(\Omega_\alpha(x)\) a antisymmetric
\(m\times m\) matrix.
Here and below
\[
 \bigl(\Omega\cdot |\nabla u|^{n-2}\nabla u\bigr)^i
 =
 \sum_{\alpha=1}^n\sum_{j=1}^N
 \Omega^{ij}_\alpha |\nabla u|^{n-2}\partial_\alpha u^j.
\]
In \cite[Eq.~(3.23)]{RiviereSurvey}, Rivi\`ere raised the following
regularity question.

\begin{question}\label{q:riviere}
Suppose that $u\in W^{1,n}$ is a weak solution to \eqref{eq:systemwithn} with $\Omega \in L^n$.
Is $u$ continuous or not?
\end{question}

We would like to emphasise that the regularity assumed on $u$ and $\Omega$ in Question \ref{q:riviere} is the natural regularity assumption for the distributional meaning of the PDE \eqref{eq:systemwithn}.

The same question was subsequently singled out by Schikorra and Strzelecki
as an open problem in their survey; see
\cite[Problem~2.5]{SchikorraStrzelecki}. They emphasized its relation to
two central regularity questions in higher-dimensional conformal geometry:
weak \(n\)-harmonic maps and \(H\)-systems.
Question \ref{q:riviere} was also posed in \cite{MartinoSchikorra}.

An instructive precursor is the counterexample of Firoozye \cite{Firoozye} for the scalar $n$-Laplace equation. He showed that, in dimensions $n>2$, the condition
\begin{equation}
\Delta_n u\in\mathcal H^1_{\mathrm{loc}}
\end{equation}
does not imply continuity. In particular, for
\begin{equation}
u(x)=\sin\!\left(\log^\alpha\frac1{|x|}\right),
\qquad 0<\alpha<1-\frac2n,
\end{equation}
one has \(\Delta_nu\in\mathcal H^1_{\mathrm{loc}}\), while \(u\) is bounded and discontinuous at the origin. The present problem is more structured: in \eqref{eq:systemwithn} the $n$-laplacian is required to componentwise factor as
\begin{equation}
\Omega\cdot|\nabla u|^{n-2}\nabla u.
\end{equation}
Thus Firoozye's example shows that critical Hardy-space control of the \(n\)-tension field alone does not yield continuity, whereas Rivi\`ere's question asks whether the additional antisymmetric first-order structure is sufficient.

The question is subtle because the two-dimensional mechanism does not
survive at the same level of generality in higher dimensions.  A number of
positive results are known under additional structure.  For instance,
\(n\)-harmonic maps into spheres and, more generally, certain homogeneous
targets enjoy full regularity; see, among others,
\cite{Strzelecki1994,MouYang}.

In the work of Martino and Schikorra \cite{MartinoSchikorra} they assume, in addition to
antisymmetry,
\begin{equation}
\label{eq:OmegainLn2}
 \Omega\in L^{(n,2)}
\end{equation}
together with the zero-order curl condition
\begin{equation}
\label{eq:RieszPotential}
 \max_{i,j}\max_{\alpha,\gamma}
 \bigl\|
 \mathcal R_\alpha\Omega^{ij}_\gamma
 -
 \mathcal R_\gamma\Omega^{ij}_\alpha
 \bigr\|_{L^{(n,1)}}<\infty,
\end{equation}
where \(\mathcal R_\alpha=\partial_\alpha(-\Delta)^{-1/2}\) denotes the
Riesz transform.  
Under these assumptions they prove continuity of $u$.
They also show
that the geometric structure of the \(n\)-harmonic-map equation allows one
to verify the required transformed-potential estimates from the single
additional hypothesis
\begin{equation}
\label{eq:nab u in Ln2}
 \nabla u\in L^{(n,2)},
\end{equation}
and hence obtain continuity of weakly \(n\)-harmonic maps which satisfy \eqref{eq:nab u in Ln2}.
However, the general regularity question of $n$-harmonic maps in $W^{1,n}$ remains open.
Our discontinuous solution can be chosen to satisfy
\(\nabla U\in L^{(n,2)}\), but its potential $\Omega$ never belongs to
\(L^{(n,2)}\).  Thus the \(L^{(n,2)}\)-integrability of the gradient alone
has no regularizing effect for the unrestricted system
\eqref{eq:systemwithn}.

\medskip
The purpose of the present paper is to show that, at this level of
generality, the answer to Rivi\`ere's Question \ref{q:riviere} is negative.  The
counterexample is explicit and has only one singular point.

\begin{theorem}
\label{thm:main thm}
Let $n>2$ and let $1<p\le \infty$.
Suppose $\max(2^{-1},p^{-1})<\beta<1$.
Then there exists some $T_0>0$ such that for any $T\ge T_0$ the map
\begin{equation}\label{eq:U-intro}
 U(x)=
 \left(
 \cos\bigl(t^{1-\beta}\bigr),
 \sin\bigl(t^{1-\beta}\bigr),
 t^{-(\beta+1/2)}\frac{x}{|x|}
 \right),
 \qquad
 t=T+\log\frac1{|x|},
\end{equation}
defined for \(x\in B^n\setminus\{0\}\) satisfies
\begin{equation}
\nabla U \in L^{(n,p)}(B^n), \qquad U\in L^{\infty}(B^n)
\end{equation}
\underline{but is discontinuous at the origin}.
Furthermore, there exists a potential 
\begin{equation}
\Omega\in \bigcap_{q>2} L^{(n,q)}\bigl(B^n,so(n+2)\otimes\R^n\bigr)
\end{equation}
such that
\begin{equation}\label{eq:main-equation}
 -\Div\bigl(|\nabla U|^{n-2}\nabla U\bigr)
 =\Omega\cdot|\nabla U|^{n-2}\nabla U
 \qquad\text{in }\mathcal D'(B^n).
\end{equation}
\end{theorem}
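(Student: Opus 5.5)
The plan has three parts: compute $\nabla U$ explicitly in polar form, read off the regularity and discontinuity of $U$, and then build $\Omega$ pointwise on $B^n\setminus\{0\}$ by linear algebra before extending the equation across the origin.

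\textbf{Gradient and regularity of $U$.} Write $r=|x|$, $\omega=x/r$ and $\theta=t^{1-\beta}$, and note $\partial_r t=-1/r$. Put $\nu=(\cos\theta,\sin\theta,0)$, $e=(-\sin\theta,\cos\theta,0)$, and let $E_k$ denote the standard basis vectors of the last $n$ coordinates of $\R^{n+2}$. A direct computation gives
\begin{equation}
 \partial_\alpha U=\frac{1}{r}\Bigl((1-\beta)t^{-\beta}\,e\,\omega_\alpha+\bigl(\beta+\tfrac12\bigr)t^{-\beta-3/2}\,\omega_\alpha\,(0,0,\omega)+t^{-\beta-1/2}\,\bigl(0,0,E_\alpha-\omega_\alpha\omega\bigr)\Bigr).
\end{equation}
Hence $|\nabla U|=(1-\beta)\,t^{-\beta}r^{-1}\bigl(1+O(t^{-1})\bigr)$. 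The three properties of $U$ then follow:
\begin{itemize}
\item Since $|\nabla U|$ is radial and decreasing in $r$ for $T\ge T_0$, its Lorentz quasinorm can be computed from the rearrangement. In the variable $t$ this gives $\|\nabla U\|_{L^{(n,p)}}^p\simeq\int_T^\infty t^{-\beta p}\,dt$, which is finite exactly when $\beta p>1$. For $p=\infty$ boundedness of $t^{-\beta}$ suffices.
\item $U$ is bounded because every component is.
\item $U$ is discontinuous at $0$: $\theta\to\infty$ as $r\to0$, so the first two components wind around the circle infinitely often while the third component tends to $0$.
\end{itemize}
The constant $T_0$ is only used to keep the error terms $O(t^{-1})$ small relative to the leading terms.

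\textbf{The $n$-tension field.} Put $G_\alpha=|\nabla U|^{n-2}\partial_\alpha U$ and $F=-\Div(|\nabla U|^{n-2}\nabla U)$ on $B^n\setminus\{0\}$, where $F$ is smooth. Since $\Div(r^{1-n}\omega)=0$, only derivatives hitting $t$ or $\theta$ survive. I will decompose $F=f_\nu\,\nu+f_e\,e+f_3$, with $f_3$ lying in the last $n$ coordinates, and estimate each piece:
\begin{itemize}
\item $f_\nu\sim t^{-\beta n}r^{-n}$; this is the centripetal term coming from $\partial_r e=-\theta'\nu$.
\item $f_e\sim t^{-\beta(n-1)-1}r^{-n}$; this comes from differentiating the amplitude.
\item $f_3$ contains the terms $\sim t^{-\beta(n-1)-\beta-1/2}r^{-n}\,\omega$ and smaller ones.
\end{itemize}

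\textbf{Construction of $\Omega$.} I look for antisymmetric $\Omega_\alpha$ of the form $a\otimes b-b\otimes a$, with $a,b$ among $\nu,e,(0,0,\omega),(0,0,E_\alpha-\omega_\alpha\omega)$ and scalar coefficients to be solved for. Each piece of $F$ is matched as follows:
\begin{itemize}
\item The $\nu$-component uses $\Omega_\alpha=c\,\omega_\alpha(\nu\otimes e-e\otimes\nu)$. Because $\nu\cdot G_\alpha=0$ and $e\cdot G_\alpha\approx|\nabla U|^{n-1}\omega_\alpha$, this requires $c\approx f_\nu/|\nabla U|^{n-1}\sim t^{-\beta}r^{-1}$.
\item The $e$-component is matched through a coupling of $e$ with the $(0,0,\omega)$ direction, paired against the radial part of $G_\alpha$.
\item $f_3$ is matched through couplings $e\otimes(0,0,\cdot)-(0,0,\cdot)\otimes e$, paired against the large component $e\,\omega_\alpha$ of $G_\alpha$.
\end{itemize}
Any unwanted byproducts of one coupling, produced when it is paired against the other components of $G_\alpha$, will be absorbed by the remaining couplings. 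This is a triangular linear system that I will solve order by order in $t^{-1/2}$.

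The bookkeeping suggests that the worst coefficient is of size $t^{-1/2}r^{-1}$. It arises when the tangential part of $G_\alpha$, of size $t^{-\beta-1/2}$ relative to $r^{-1}|\nabla U|^{n-2}$, must carry a piece of $F$ of relative size $t^{-\beta-1}$. Since the $L^{(n,q)}$ quasinorm of $t^{-\gamma}r^{-1}$ is comparable to $(\int t^{-\gamma q}\,dt)^{1/q}$, a coefficient of this size lies in $L^{(n,q)}$ exactly for $q>2$ and not for $q=2$. This matches the claimed $\Omega\in\bigcap_{q>2}L^{(n,q)}$. The main obstacle is to make this pointwise linear algebra exact, and in particular to check that no component of $F$ is forced to be divided by a quantity smaller than $t^{-\beta-1/2}r^{-1}|\nabla U|^{n-2}$. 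If the naive ansatz fails, I will add couplings of $\nu$ with the third block, using $\nu\cdot G_\alpha=0$ so that such couplings contribute only in the $\nu$-direction.

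\textbf{Extension across the origin.} The identity $F=\Omega\cdot G$ then holds pointwise on $B^n\setminus\{0\}$. Both $|\nabla U|^{n-1}$ and $|\Omega|\,|\nabla U|^{n-1}$ are integrable; the latter follows from the Lorentz H\"older inequality with $L^{(n,q)}\cdot L^{(n/(n-1),\cdot)}$. To pass to $\mathcal D'(B^n)$, I test with $\varphi(1-\eta_\varepsilon)$, where $\eta_\varepsilon$ are cut-offs with $\|\nabla\eta_\varepsilon\|_{L^n}\to0$; such cut-offs exist because a point has zero $n$-capacity. The boundary term is then bounded by $\| |\nabla U|^{n-1}\|_{L^{n/(n-1)}(B_\varepsilon)}\|\nabla\eta_\varepsilon\|_{L^n}\to0$, which gives \eqref{eq:main-equation} in $\mathcal D'(B^n)$.
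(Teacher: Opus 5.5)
Your computation of $\nabla U$ is fine, up to a sign: the $e$-term of $\partial_\alpha U$ is $-(1-\beta)t^{-\beta}e\,\omega_\alpha/r$. The Lorentz regularity, boundedness and discontinuity arguments are fine. So is the passage to $\mathcal D'(B^n)$ via zero-$n$-capacity cut-offs, which is a legitimate alternative to the paper's direct bound $\lesssim t_\varepsilon^{-(n-1)\beta}$ on the flux through $\partial B_\varepsilon$.

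The gap is the construction of $\Omega$, which is the real content of the theorem. You only propose an ansatz to be solved ``order by order'', with a fallback in case it fails. Worse, the one concrete assignment you make for the critical component cannot work.

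In any orthonormal frame $\omega,e_2,\dots,e_n$ one has $\Omega\cdot G=|\nabla U|^{n-2}\bigl(\Omega[\omega]\,\partial_rU+\sum_{j\ge2}\Omega[e_j]\,\partial_{e_j}U\bigr)$. Also $\langle A\,\partial_rU,\partial_rU\rangle=0$ for every antisymmetric $A$. Write $\partial_rU=-\frac1r\bigl(v\,e+a_t\,(0,0,\omega)\bigr)$ with $v=(1-\beta)t^{-\beta}$ and $a=t^{-\beta-1/2}$. The component of $F$ along $\partial_rU$ is essentially your $f_e$. It gets no contribution whatsoever from the radial part of $G$, so it must be produced entirely by the tangential derivatives $\partial_{e_j}U=\frac ar(0,0,e_j)$.

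Your coupling of $e$ with $(0,0,\omega)$ against the radial part only sees the tiny component $|a_t|/r\sim t^{-\beta-3/2}/r$ of $\partial_rU$. Matching $f_e$ that way forces a coefficient $\sim t^{1/2}/r\notin L^n$. It simultaneously creates a $(0,0,\omega)$-byproduct larger than $f_3$ by a factor of order $t$, so nothing can be absorbed perturbatively. Your fallback (coupling $\nu$ with the third block) only produces $\nu$-components and does not touch this. Separately, the leading part of $f_3$ is $(n-1)a\,r^{-2}|\nabla U|^{n-2}\sim t^{-\beta(n-1)-1/2}r^{-n}$, not $t^{-\beta n-1/2}r^{-n}$. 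So the radial coupling for $f_3$ is itself of size $t^{-1/2}/r$.

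What closes the gap is to write $F=r^{-2}|\nabla U|^{n-2}g$ and split $g=g^{\parallel}+g^{\perp}$ orthogonally with respect to $\partial_tU=-r\,\partial_rU$. The two pieces are sent through different directions of $\Omega$, using $(X\wedge Y)Y=|Y|^2X$ for $X\perp Y$. Set $\Omega[\omega]=-\frac1r\frac{g^{\perp}\wedge\partial_tU}{|\partial_tU|^2}$, and $\Omega[X]=\frac{g^{\parallel}\wedge(0,0,X)}{(n-1)ar}$ for $X\perp\omega$. The second choice is legitimate because $g^{\parallel}\parallel\partial_tU\perp(0,0,X)$. Then $\Omega\cdot G=r^{-2}|\nabla U|^{n-2}(g^{\perp}+g^{\parallel})=F$ exactly, with no order-by-order solving.

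The bounds are then $|g^{\perp}|\lesssim t^{-1/2}|\partial_tU|$, which uses $\beta>\frac12$ for the $\nu$-part $v^2$. Also $|g^{\parallel}|\lesssim t^{-1/2}a$, since the $e$-component of $g$ is of size $t^{-\beta-1}$. Together they give $|\Omega|\lesssim t^{-1/2}/r$, hence $\Omega\in\bigcap_{q>2}L^{(n,q)}$.

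Your closing heuristic, that the tangential part of $G$ must carry a piece of relative size $t^{-\beta-1}$, points exactly in this direction. But it contradicts your own matching table, and as written the existence of an admissible $\Omega$ with the claimed bounds is not established.
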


\begin{remark}
The example constructed in Theorem \ref{thm:main thm} is not a counterexample to the regularity of weakly $n$-harmonic maps and small modifications of it don't lead to one. 
As explained in \cite{SchikorraStrzelecki} a counterexample of this kind would be required to be singular on a perfect set, rather than on an isolated point. 
\end{remark}

\begin{remark}
While preparing this manuscript for submission to arXiv, the author became aware of the contemporaneous work \cite{MartinoSchikorra2026}, which independently gives a counterexample to Question \ref{q:riviere}.
In fact, both papers treat the same family of examples constructed in Theorem \ref{thm:main thm}.
\end{remark}

\noindent
\textbf{Discussion} 
The first two components in \eqref{eq:U-intro} keep winding around $S^1$ as $|x|\to0$ and are therefore responsible for the discontinuity at the origin.
These two first component generate a non-zero component of the $n$-tension field that is parallel to $\partial_r U$. 
But we note that whenever $A$ is an antisymmetric matrix one hat $A\partial_r U \perp \partial_r U$ and therefore the $n$-tension field can not be generated from $\partial_r U$ alone. 
This is precisely the role that the remaining $n$-components have in \eqref{eq:U-intro}.
These generate $n-1$ additional directions in the image of $\nabla U$.
Their size is chosen so that producing
the missing parallel component costs
\[
 \frac1{|x|\,t^{1/2}}
\]
to the potential. 
This term is \(n\)-integrable exactly when \(n>2\), and it is also
responsible for the universal Lorentz obstruction
\(\Omega\notin L^{(n,2)}\), independently of \(\beta\).
The parameter $\beta$ controls the winding speed with
\begin{equation}
|\nabla U| \simeq \frac{1}{|x| t^{\beta}},
\end{equation}
which reveals the regularity $\nabla U\in L^{n,p}$ whenever $np>1$.
The underlaying idea in the construction of the potential $\Omega$ is to define it via a wedge product of the form $(X\wedge Y)/|Y|^2$, where $X$ is related to the $n$-tension field of $U$ and $Y$ to the gradient $\nabla U$. One then exploits the fact that whenever $X\perp Y$ the identity 
\begin{equation}
\frac{(X\wedge Y)}{|Y|^2} Y=X
\end{equation}
holds, permitting to gain back the $n$-tension field. 

\medskip
\noindent
\textbf{AI Usage}
The example was provided by ChatGPT 5.6 Sol while the author was exploring possible counterexamples to the regularity problem of weakly $n$-harmonic maps on August 5, 2026.
The author identified it as a solution to the more general problem with the antisymmetric potential as in \eqref{eq:systemwithn}.
Furthermore, the author simplified the paramters and notations for better readability.
The proof has been checked by the author and is correct.
The work was written by the author, however code snippets may occasionally come from LLMs including ChatGPT 5.6 Sol or Gemini 3.6 Thinking.

\section{Construction}

\subsection{The Map $U$ and its $n$-tension Field}

We will choose $T_0>0$ later down the line.
Suppose at the moment $T\ge T_0$.
To $x\in B^n\setminus \{0\}$ introduce the notation
\begin{equation}
r=|x|, \qquad
t=T + \log\left(\frac{1}{r}\right), \qquad
\omega= \frac{x}{|x|}.
\end{equation}
For a given function $V$ derviatves transform as
\begin{equation}
\label{eq:der r to t trans}
\partial_r V = -\frac{1}{r}\ \partial_t V.
\end{equation}
In what follows we introduce some additional notation which will be useful to simplify computations.
To any $\xi \in \R^n$ set
\begin{equation}
\widehat \xi =(0,\xi) \in \R^{n+2}.
\end{equation}
We introduce the angle
\begin{equation}
\phi(t)= t^{1-\beta}
\end{equation}
and the angular decay of the function
\begin{equation}
a(t) = t^{-(\beta+1/2)}.
\end{equation}
We also define the angular speed as
\begin{equation}
v(t) = (1-\beta) t^{-\beta} =\phi_t(t)
\end{equation}
We define the curves
\begin{equation}
q(t) = (\cos \phi(t), \sin\phi(t),0) \in \R^{n+2}
\end{equation}
and also
\begin{equation}
\tau(t) = (-\sin \phi(t), \cos\phi(t),0) \in \R^{n+2}
\end{equation}
such that
\begin{equation}
q_t(t)= v(t) \tau(t).
\end{equation}
We now have with $U$ as in Theorem \ref{thm:main thm}
\begin{equation}
U=q +a \widehat \omega
\end{equation}
and clearly $U\in L^{\infty}(B^n)\cap C^{\infty}(B^n\setminus \{0\})$.
We now show that $U$ is discontinuous at the origin:
Fix \(\omega_0\in S^{n-1}\), and choose
\[
 t_k=(2\pi k)^{1/(1-\beta)},
 \qquad
 \widetilde t_k=((2k+1)\pi)^{1/(1-\beta)}.
\]
The points
\[
 x_k=e^{T-t_k}\omega_0,
 \qquad
 \widetilde x_k=e^{T-\widetilde t_k}\omega_0
\]
converge to zero, whereas \(a(t)\to0\) (for $t\to\infty$) and therefore
\[
 U(x_k)\to(1,0,0),
 \qquad
 U(\widetilde x_k)\to(-1,0,0),
\]
as $k\to\infty$.
Hence, \(U\) has no limit at the origin.
\par 

We will now introduce polar coordinates:
Let $e_2,\dots,e_{n} \in T_{e_1} S^{n-1}$ be an orthonormal frame where $e_1=\omega(x)=x/|x|\in S^{n-1}$.
We have the explicit derivatives
\begin{equation}
\label{eq:derivatives of U}
\partial_{e_1} U = \partial_{r} U = - \frac{1}{r} (v\tau + a_t \widehat\omega)\in \R^{n+2}, \qquad
\partial_{e_j} U = \frac{a}{r} \widehat e_j \in \R^{n+2},
\qquad j=2,\dots,n.
\end{equation}
It immediatley follows that
\begin{equation}
|\nabla U|^2 = r^{-2} (v^2 + a_t^2 + (n-1)a^2).
\end{equation}
This gives for $T_0$ large enough
\begin{equation}
|\nabla U|\lesssim r^{-1}t^{-\beta} = \frac{1}{|x| \log(1/|x|)^{\beta}}
\end{equation}
and therefore by the critical Lorentz-space (see \cite{Grafakos2014}, chapter 1.4) estimate
\begin{equation}
\nabla U \in L^{n,p}(B^n) \iff p\beta>1.
\end{equation}
In particular $\nabla U \in L^{n,p}(B^n)$ for the choice of $p$ and $\beta$ as in Theorem \ref{thm:main thm}.
Letting
\begin{equation}
K(t) = (v^2+a_t^2+(n-1)a^2)^{(n-2)/2}
\end{equation}
one has
\begin{equation}
\label{eq:decomp of nabla U n-2}
|\nabla U|^{n-2}= |\nabla_{\mathcal E} U|^{n-2} = r^{-(n-2)} K(t),
\end{equation}
where clearly $K$ depends only on $t$.
We now compute the $n$-tension field:

\begin{lemma}
\label{lemma:n tension field}
For $U$ as in \eqref{eq:U-intro} one has to $x\in B^n$
\begin{equation}
\label{eq:n-tension field}
-\Div(|\nabla U|^{n-2} \nabla U) = r^{-n} Kg,
\end{equation}
where we introduced the normalized $n$-tension field
\begin{equation}
\label{eq:norm ntension field g}
g = v^2 q - \frac{\partial_t(Kv)}{K} \tau + \left( (n-1)a - \frac{\partial_t(Ka_t)}{K} \right) \widehat \omega.
\end{equation}
\end{lemma}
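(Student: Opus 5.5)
We compute pointwise on $B^n\setminus\{0\}$, where $U$ is smooth, so the identity is classical there. The plan is to write the vector field $|\nabla U|^{n-2}\nabla U$ in polar form and take its divergence component by component.

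By \eqref{eq:derivatives of U} and \eqref{eq:decomp of nabla U n-2}, for each component $i$ the vector field $|\nabla U|^{n-2}\nabla U^i$ splits into a radial part and a tangential part:
\[
 |\nabla U|^{n-2}\nabla U = -r^{-(n-1)}K\,(v\tau+a_t\widehat\omega)\otimes\omega \;+\; r^{-(n-1)}K a\,\sum_{j\ge2}\widehat e_j\otimes e_j .
\]
The tangential sum is $Ka\,r^{-(n-1)}\,(\widehat{I}-\widehat\omega\otimes\omega)$. Here $\widehat I$ is the map $\xi\mapsto\widehat\xi$, so this piece equals $r^{-(n-1)}Ka\,\widehat{\nabla\omega}\,r$, using $\nabla\omega=(I-\omega\otimes\omega)/r$. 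In other words it is the field $r^{-(n-2)}Ka\,\nabla\widehat\omega$.

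We treat the two pieces separately.

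\emph{Radial piece.} For a function $F(r)$ one has $\Div(F(r)\omega)=r^{1-n}\partial_r(r^{n-1}F)$. Apply this to $F=-r^{-(n-1)}K(v\tau+a_t\widehat\omega)$, keeping in mind that $\widehat\omega$ also depends on the angle. Write
\[
 \Div\bigl(f\,\widehat\omega\otimes\omega\bigr)=\partial_r f\,\widehat\omega+f\Bigl(\tfrac{n-1}{r}\widehat\omega+\partial_\omega\widehat\omega\Bigr),
\]
and note $\partial_\omega\widehat\omega=\partial_r\widehat\omega=0$. Then $r^{n-1}F$ depends only on $t$. Using \eqref{eq:der r to t trans} we get
\[
 -\Div(\text{radial})=-r^{1-n}\cdot\tfrac1r\,\partial_t\bigl(K(v\tau+a_t\widehat\omega)\bigr).
\]
Expanding with $\tau_t=-v q$ gives
\[
 r^{-n}\Bigl(-\partial_t(Kv)\tau+Kv^2q-\partial_t(Ka_t)\widehat\omega\Bigr).
\]
Sign bookkeeping is needed here: the derivative of $r^{1-n}$ acts on $r^{n-1}F$, which has no explicit $r$ besides $t$.

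\emph{Tangential piece.} Write it as $G(t)\,r^{-(n-2)}\nabla\widehat\omega$ with $G=Ka$. Use the identities
\[
 \Delta\omega=-\tfrac{n-1}{r^2}\,\omega,\qquad \nabla\omega\cdot\omega=0,\qquad \nabla(r^{-(n-2)}G)\parallel\omega .
\]
These give
\[
 -\Div(\text{tangential})=-r^{-(n-2)}G\,\Delta\widehat\omega=(n-1)\,r^{-n}Ka\,\widehat\omega .
\]
Adding the two pieces yields exactly $r^{-n}Kg$ with $g$ as in \eqref{eq:norm ntension field g}.

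The main obstacle is a careful divergence computation of the radial term. The vector $\widehat\omega$ varies with the angle, so one must check that $\Div$ of $f\,\widehat\omega\otimes\omega$ produces no extra angular term; it does not, since $\partial_\omega\widehat\omega=0$. One must also track the signs coming from $\partial_r=-r^{-1}\partial_t$.

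If the lemma is meant distributionally on all of $B^n$, one more step is needed. Since $|\nabla U|^{n-1}\lesssim r^{1-n}t^{-\beta(n-1)}$, the flux through $\partial B_\varepsilon$ is
\[
 O\bigl(t^{-\beta(n-1)}\bigr)\to0 .
\]
Also, $r^{-n}Kg$ is integrable near $0$, because $|g|\lesssim t^{-2\beta}+a$ and $K\lesssim t^{-\beta(n-2)}$, which gives decay $t^{-\gamma}$ with $\gamma>1$. Cutting off a small ball and letting $\varepsilon\to0$ then shows there is no Dirac mass at the origin.
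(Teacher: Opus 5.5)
Your computation is correct and follows essentially the paper's route: splitting $|\nabla U|^{n-2}\nabla U$ into the radial piece and the tangential piece $r^{-(n-2)}Ka\,\nabla\widehat\omega$ is a hands-on derivation of the polar divergence formula the paper quotes, and it uses the same two inputs, $\tau_t=-vq$ and $\Delta\omega=-\tfrac{n-1}{r^2}\omega$ (equivalently $\Delta_{S^{n-1}}\omega=-(n-1)\omega$). Your closing flux estimate through $\partial B_\varepsilon$ is exactly the argument the paper gives separately in Proposition~\ref{prop:weak}.
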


\begin{proof}
Using the standard divergence formula in polar coordinates, applied componentwise to $U$, we have
\begin{equation}
\Div\bigl(|\nabla U|^{n-2}\nabla U\bigr)
=
\frac{1}{r^{n-1}}
\partial_r\!\left(
r^{n-1}|\nabla U|^{n-2}\partial_r U
\right)
+\frac{1}{r^2}
\Div_{S^{n-1}}\!\left(
|\nabla U|^{n-2}
\nabla_{S^{n-1}}U
\right).
\end{equation}
We write with \eqref{eq:der r to t trans}, \eqref{eq:derivatives of U} and \eqref{eq:decomp of nabla U n-2}
\begin{equation}
\begin{aligned}
\frac{1}{r^{n-1}}
\partial_r\!\left(
r^{n-1}|\nabla U|^{n-2}\partial_r U
\right)
&= r^{1-n}\partial_r\!\left(
r K \partial_r U
\right) \\
&= r^{-n} \partial_t(K(v\tau+a_t\widehat\omega)) \\
&= r^{-n} \Big( \partial_t(Kv)\tau - Kv^2q
+ \partial_t(K a_t) \widehat\omega \Big) \\
&= r^{-n} K \Bigg(\frac{\partial_t(Kv)}{K} \tau - v^2q
+ \frac{\partial_t(K a_t)}{K} \widehat\omega \Bigg) 
\end{aligned}
\end{equation}
Since, $K$, $q$ and $a$ depend only on $t$ we find
\begin{equation}
\begin{aligned}
\frac{1}{r^2}
\Div_{ S^{n-1}}\!\left(
|\nabla U|^{n-2}
\nabla_{\mathbb S^{n-1}}U
\right)
&=\frac{1}{r^2}
\Div_{S^{n-1}}\!\left(
r^{-(n-2)} K\
\nabla_{ S^{n-1}}U
\right) \\
&=r^{-n} K \Delta_{S^{n-1}} (q+a\widehat\omega) \\
&=r^{-n} K a\Delta_{S^{n-1}} (\widehat\omega) \\
&=-(n-1)r^{-n} K a\ \widehat\omega \\
&=r^{-n}K \big(-(n-1) a\ \widehat\omega \big)
\end{aligned}
\end{equation}
This proves \eqref{eq:n-tension field} and the lemma.
\end{proof}

\subsection{The Antisymmetric Potential $\Omega$ and the PDE}

We start by decomposing $g$
\begin{equation}\label{eq:g-decomp}
 g^{\parallel}
 =
 \frac{\langle g,\partial_t U \rangle}{|\partial_t U|^2}\partial_t U,
 \qquad
 g^{\perp}=g-g^{\parallel}.
\end{equation}

\begin{lemma}\label{lem:g-estimates}
There exists some $T_0>0$ such that for any $t\ge T\ge T_0$
\begin{equation}
\label{eq: g-per} 
 \frac{|g^{\perp}|}{|\partial_tU|}
 \leq
 C\ t^{-1/2},
\end{equation}
as well as
\begin{equation}
\label{eq: g-par}
 \frac{|g^{\parallel}|}{a}
 \leq
 C\ t^{-1/2},
\end{equation}
where $C>0$ may depend at most on $n$, $\beta$ and $T_0$.
In particular, the implicit constants in \eqref{eq: g-per} and \eqref{eq: g-par} are positive
and uniform for \(t\geq T\).
\end{lemma}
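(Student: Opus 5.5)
Since everything depends only on $t$, the plan is to read off the orders of magnitude of all the coefficients in $g$ and $\partial_tU$, and then bound $g^\perp$ and $g^\parallel$ using orthogonality. First collect the asymptotics as $t\to\infty$. We have $v\simeq t^{-\beta}$, $a=t^{-\beta-1/2}$ and $a_t\simeq t^{-\beta-3/2}$. Hence
\[
v^2+a_t^2+(n-1)a^2=v^2\bigl(1+O(t^{-1})\bigr),
\]
and therefore $K=v^{n-2}(1+O(t^{-1}))$. Differentiating, $\partial_tK/K=O(t^{-1})$, which uses $\partial_t v/v=-\beta/t$. We also have $\partial_t U=v\tau+a_t\widehat\omega$, so $|\partial_tU|^2=v^2+a_t^2\simeq t^{-2\beta}$. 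The three vectors $q,\tau,\widehat\omega$ are pairwise orthogonal unit vectors.

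Next, estimate the coefficients of $g$ in \eqref{eq:norm ntension field g}:
\begin{itemize}
\item The $q$-coefficient is $v^2\simeq t^{-2\beta}$.
\item The $\tau$-coefficient is $\partial_t(Kv)/K=\partial_tv+v\,\partial_tK/K=O(t^{-\beta-1})$.
\item The $\widehat\omega$-coefficient is $(n-1)a-\partial_ta_t-a_t\partial_tK/K=(n-1)a+O(t^{-\beta-5/2})$, which is $\simeq a$.
\end{itemize}
These all follow from the explicit power laws. The only fact needed about $K$ is that its logarithmic derivative is $O(1/t)$, and this I would verify by writing $K=(v^2(1+\epsilon(t)))^{(n-2)/2}$ with $\epsilon,\epsilon_t$ explicitly of sizes $t^{-1}$ and $t^{-2}$.

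For \eqref{eq: g-par}, note that $q\perp\partial_tU$. So
\[
\langle g,\partial_tU\rangle=\tfrac{\partial_t(Kv)}{K}\,v+\bigl((n-1)a+O(t^{-\beta-5/2})\bigr)a_t,
\]
whose size is $O(t^{-2\beta-1})+O(t^{-2\beta-2})=O(t^{-2\beta-1})$. Then
\[
|g^\parallel|=\frac{|\langle g,\partial_tU\rangle|}{|\partial_tU|}\lesssim t^{-\beta-1}=a\,t^{-1/2},
\]
which is exactly \eqref{eq: g-par}.

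For \eqref{eq: g-per}, first use $|g^\perp|\le|g|$. We have $|g|\lesssim v^2+t^{-\beta-1}+a\lesssim t^{-\beta-1/2}$, where the $\widehat\omega$-term dominates because $2\beta>\beta+1/2$ exactly when $\beta>1/2$, which holds by assumption. Dividing by $|\partial_tU|\simeq t^{-\beta}$ gives $t^{-1/2}$. Here the hypothesis $\beta>1/2$ enters: the $q$-component $v^2$ must be dominated by $a\,t^{0}$, or at least by $t^{-\beta-1/2}$.

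The main obstacle is the bookkeeping for $\partial_tK/K$ uniformly in $t\ge T\ge T_0$, with constants depending only on $n,\beta,T_0$. I would handle it by choosing $T_0$ so large that $\epsilon(t)\le1/2$ for $t\ge T_0$. Then every estimate is an explicit power of $t$ with constants that are uniform on $[T_0,\infty)$, and the lower bounds $|\partial_tU|\ge v$ and $(n-1)a-|\text{error}|\ge a$ hold there.
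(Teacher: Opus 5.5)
Your proposal is correct and follows essentially the same route as the paper: both rest on $a/v\simeq t^{-1/2}$, $|a_t|/v\simeq t^{-3/2}$, $K_t/K=O(t^{-1})$ and $|\partial_tU|\simeq v$, bound $|\langle g,\partial_tU\rangle|$ directly to get the $g^{\parallel}$ estimate, and use the crude bound $|g^{\perp}|\le|g|$ together with $\beta>1/2$ to get the $g^{\perp}$ estimate. Your expansion of $\langle g,\partial_tU\rangle$ has a sign slip: the $\tau$-coefficient of $g$ is $-\partial_t(Kv)/K$, so that term should carry a minus sign, which does not affect the absolute-value bounds.
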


\begin{proof}
We start by calculating the ratios
\begin{equation}\label{eq:ratios}
 \frac{a}{v}=\frac{1}{1-\beta}t^{-1/2},
 \qquad
 \frac{a_t}{v}
 =-\frac{\beta+1/2}{1-\beta}t^{-3/2}.
\end{equation}
By definition we have
\begin{equation}\label{eq:K-exact}
 K
 =
 v^{n-2}
 \left(
q(t)
 \right)^{(n-2)/2},
\end{equation}
where
\begin{equation}
q(t)=
 1+\frac{n-1}{(1-\beta)^2} t^{-1}
 +\frac{(\beta+1/2)^2}{(1-\beta)^2} t^{-3}.
\end{equation}
This implies that for $t\ge T_0(n,\beta)>0$ large enough we can enforce $q\in (1/2,3/2)$ say.
One has
\begin{equation}
\begin{aligned}
\label{eq:Kt-devide-by-K}
\frac{K_t}{K}
&= \partial_t \log K =(n-2)\partial_t (\log v) + \frac{n-2}{2} \partial_t (\log q) \\
&=(n-2)\frac{v_t}{v} + \frac{n-2}{2} \frac{q_t}{q} \\
&=-(n-2)\beta\ t^{-1} + \frac{n-2}{2} \frac{q_t}{q},
\end{aligned}
\end{equation}
where here $q_t(t)= \mathcal O (t^{-2})$ as $t\to\infty$.
Moreover,
\begin{equation}
\label{eq:partialtUequivtov}
|\partial_tU|= v \left(1 + \frac{a_t^2}{v^2} \right)^{1/2}
\end{equation}
and hence with \eqref{eq:ratios} for $t\ge T_0(n,\beta)$ we have
\begin{equation}
\label{eq:growth of Ut}
|\partial_tU|\simeq v.
\end{equation}

We compute with \eqref{eq:der r to t trans}, \eqref{eq:derivatives of U} and \eqref{eq:norm ntension field g}
\begin{equation}
\begin{aligned}
\frac{|g^{\parallel}|}{a}
&= \frac{1}{a|\partial_tU|} |\langle g,\partial_t U \rangle|
= \frac{1}{a|\partial_tU|} |\langle g, v\tau + a_t \widehat\omega\rangle| \\
&\le \Bigg|\frac{\partial_t(Kv)}{Ka}\Bigg| \frac{v}{|\partial_tU|} + \left| (n-1) - \frac{\partial_t(Ka_t)}{aK} \right| \frac{|a_t|}{|\partial_tU|} \\
&\le  \left|\underbrace{\frac{K_t}{K}}_{\simeq t^{-1}+\mathcal O(t^{-2})} \underbrace{\frac{v}{a}}_{\simeq t^{1/2}} + \underbrace{\frac{v_t}{a}}_{\simeq t^{-1/2}} \right| \underbrace{\frac{v}{|\partial_tU|}}_{\simeq 1} \\
&\qquad\qquad + \left| (n-1) - \left( \underbrace{\frac{K_t}{K}}_{\simeq t^{-1}} \underbrace{\frac{a_t}{a}}_{\simeq t^{-1}} + \underbrace{\frac{a_{tt}}{a}}_{\simeq t^{-2}} \right) \right| \underbrace{\frac{|a_t|}{|\partial_tU|}}_{\simeq t^{-3/2}},
\end{aligned}
\end{equation}
where in the last line we used \eqref{eq:ratios}, \eqref{eq:K-exact}, \eqref{eq:Kt-devide-by-K}, \eqref{eq:growth of Ut}.
This shows \eqref{eq: g-par}.
Now we may use the rough estimate
\begin{equation}
\begin{aligned}
\frac{|g^\perp|}{|\partial_tU|}
&\le \frac{|g|}{|\partial_tU|}
\le \frac{1}{|\partial_tU|} \left( v^2 + \left|\frac{K_t}{K}v\right| +|v_t| + (n-1)a + \left|\frac{K_t}{K}a_t\right| + |a_{tt}| \right) \\
& \le \left( \underbrace{\frac{v}{|\partial_tU|}}_{\simeq1} \underbrace{v}_{\simeq t^{-\beta}} + \underbrace{\left|\frac{K_t}{K}\right|}_{\simeq t^{-1}} \underbrace{\frac{v}{|\partial_tU|}}_{\simeq 1} + \underbrace{\frac{|v_t|}{|\partial_tU|}}_{\simeq t^{-1}} + (n-1) \underbrace{\frac{a}{|\partial_tU|}}_{\simeq t^{-1/2}}
+ \underbrace{\left|\frac{K_t}{K}\right|}_{\simeq t^{-1}} \underbrace{\frac{|a_t|}{|\partial_tU|}}_{\simeq t^{-3/2}} + \underbrace{\frac{|a_{tt}|}{|\partial_tU|}}_{\simeq t^{-5/2}}
\right)
\end{aligned}
\end{equation}
The bound \eqref{eq: g-per} follows.
\end{proof}

We now define the potential intrinsically, without using the tangent frame on the sphere.
Define the orthogonal projection of \(\R^n\) onto \(T_\omega S^{n-1}\) by
\begin{equation}
 P_\omega X=X-\langle X,\omega\rangle\omega.
\end{equation}
For \(X,Y\in\R^m\), define
\begin{equation}\label{eq:wedge}
 (X\wedge Y)Z
 =
 X\langle Y,Z\rangle-Y\langle X,Z\rangle.
\end{equation}
Then \(X\wedge Y\in so(m)\).  If \(X\perp Y\), then
\begin{equation}\label{eq:wedge-action}
 (X\wedge Y)Y=|Y|^2X.
\end{equation}
Whenever working in coordinates $x,y\in\R^m$ we may express the wedge product as
\begin{equation}
x \wedge y =x y^{tr} -y x^{tr},
\end{equation}
where the products are to be understood as matrix multiplication and $tr$ denotes the transpose of a matrix.
To $x\in B^n\setminus \{0\}$ and any vector $X\in\R^n$ set
\begin{equation}
 \Omega(x)[X]
 \coloneqq
 \frac{g^{\perp}\wedge \partial_{r}U}{r^2|\partial_{r}U|^2} \langle X,\omega\rangle
+ \frac{ g^{\parallel}\wedge\widehat{P_\omega X}}{(n-1)ar}
\end{equation}
Thus \(\Omega(x)[X]\in so(n+2)\), and \(\Omega\) is a smooth
\(so(n+2)\)-valued one-form on \(B^n\setminus\{0\}\).

\begin{proposition}\label{prop:pointwiseOmega}
The potential \(\Omega\) satisfies
\begin{equation}\label{eq:pointwiseOmega}
 -\Div\bigl(|\nabla U|^{n-2}\nabla U\bigr)
 =
 \Omega\cdot|\nabla U|^{n-2}\nabla U
\end{equation}
pointwise on \(B^n\setminus\{0\}\).  Moreover,
\begin{equation}\label{eq:Omega-est}
 |\Omega(x)|
\le \frac{C}{r} t^{-1/2}
\end{equation}
and thus $\Omega\in \bigcap_{q>2} L^{(n,q)}(B^n)$.
\end{proposition}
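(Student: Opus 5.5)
The plan is to verify the identity \eqref{eq:pointwiseOmega} first, using the polar frame $e_1=\omega,e_2,\dots,e_n$, and then bound the two pieces of $\Omega$ separately with Lemma \ref{lem:g-estimates}. For the identity, note that $\Omega\cdot|\nabla U|^{n-2}\nabla U=|\nabla U|^{n-2}\sum_{\alpha}\Omega[e_\alpha]\partial_{e_\alpha}U$, since the contraction is frame independent: $\Omega$ is linear in $X$ and the sum is an orthonormal trace. Using \eqref{eq:decomp of nabla U n-2}, the right-hand side becomes $r^{-(n-2)}K\sum_\alpha\Omega[e_\alpha]\partial_{e_\alpha}U$. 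Comparing with Lemma \ref{lemma:n tension field}, it then suffices to show
\[
\sum_{\alpha=1}^n\Omega[e_\alpha]\partial_{e_\alpha}U=r^{-2}g .
\]

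\textbf{Radial direction.} For $\alpha=1$ we have $P_\omega\omega=0$, so only the first term survives and $\Omega[e_1]=\frac{g^\perp\wedge\partial_rU}{r^2|\partial_rU|^2}$. By \eqref{eq:der r to t trans}, $\partial_rU=-r^{-1}\partial_tU$, so $g^\perp\perp\partial_rU$. Then \eqref{eq:wedge-action} gives $\Omega[e_1]\partial_rU=r^{-2}g^\perp$.

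\textbf{Angular directions.} For $j\ge2$ we have $\langle e_j,\omega\rangle=0$ and $P_\omega e_j=e_j$, so $\Omega[e_j]=\frac{g^\parallel\wedge\widehat e_j}{(n-1)ar}$. By \eqref{eq:derivatives of U}, $\partial_{e_j}U=\frac ar\widehat e_j$. The vector $g^\parallel$ is a multiple of $\partial_tU=v\tau+a_t\widehat\omega$, and both $\tau$ and $\widehat\omega$ are orthogonal to $\widehat e_j$, because $\tau$ lives in the first two coordinates and $e_j\perp\omega$. Hence \eqref{eq:wedge-action} yields
\[
\Omega[e_j]\partial_{e_j}U=\frac{a}{(n-1)ar^2}\,g^\parallel .
\]
Summing over the $n-1$ values of $j$ gives $r^{-2}g^\parallel$. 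Adding the radial contribution gives $r^{-2}(g^\perp+g^\parallel)=r^{-2}g$, which proves the identity.

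\textbf{Pointwise bound.} We use $|X\wedge Y|\le2|X||Y|$, $|P_\omega X|\le|X|$ and $|\langle X,\omega\rangle|\le|X|$. This gives
\[
|\Omega(x)|\lesssim \frac{|g^\perp|}{r^2|\partial_rU|}+\frac{|g^\parallel|}{ar}
=\frac1r\Bigl(\frac{|g^\perp|}{|\partial_tU|}+\frac{|g^\parallel|}{(n-1)a}\Bigr),
\]
where we used $|\partial_rU|=r^{-1}|\partial_tU|$. Lemma \ref{lem:g-estimates} then gives \eqref{eq:Omega-est}.

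\textbf{Lorentz regularity.} The bound reads $|\Omega|\lesssim|x|^{-1}(T+\log\frac1{|x|})^{-1/2}$. We apply the same critical Lorentz-space criterion used above for $\nabla U$: a radial profile $|x|^{-1}\log(1/|x|)^{-\gamma}$ lies in $L^{(n,q)}$ near the origin if and only if $q\gamma>1$. With $\gamma=1/2$ this means $q>2$, so $\Omega\in\bigcap_{q>2}L^{(n,q)}$. One should check that the decreasing rearrangement of such a profile is comparable to $s^{-1/n}\log(1/s)^{-\gamma}$ up to constants; this is what makes $\int_0(s^{1/n}f^*(s))^q\,ds/s\simeq\int_0\log(1/s)^{-q/2}\,ds/s$ finite exactly when $q>2$. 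Since $T\ge T_0$, the bound is harmless away from the origin.

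\textbf{Main obstacle.} The delicate point is the exact cancellation in the identity. It needs $g^\parallel$ to be orthogonal to every $\widehat e_j$, which follows because $\partial_tU$ has no tangential $\widehat{\ }$-component. It also needs the normalizations $1/(r^2|\partial_rU|^2)$ and $1/((n-1)ar)$ to match the scalings of $\partial_rU$ and $\partial_{e_j}U$, including the sign of $\partial_rU$, which cancels because it appears twice in the radial term. The analytic estimates are immediate from Lemma \ref{lem:g-estimates}.
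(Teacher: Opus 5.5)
Your proposal is correct and follows the paper's proof essentially step for step. You evaluate $\Omega$ on the polar frame $e_1=\omega,e_2,\dots,e_n$, use the wedge identity \eqref{eq:wedge-action} with $g^\perp\perp\partial_rU$ and $g^\parallel\perp\widehat{e_j}$ to recover $r^{-2}(g^\perp+g^\parallel)=r^{-2}g$, bound both pieces of $\Omega$ by Lemma \ref{lem:g-estimates}, and conclude with the critical Lorentz-space criterion; you additionally justify the frame-independence of the contraction and the orthogonality $g^\parallel\perp\widehat{e_j}$, which the paper only asserts.
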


\begin{proof}
Notice that to compute the right-hand side of \eqref{eq:pointwiseOmega} in coordinates we may verify this with our orthonormal basis of $\R^n$ of choice. We will use the basis $\{e_1,\dots,e_n\}$.
For any $X\in \R^n$ one has
\begin{equation}
 \Omega(x)[X] =
 \frac{g^{\perp}\wedge \partial_{e_1}U}{r^2|\partial_{e_1}U|^2} \langle X,e_1\rangle \\
 + \frac{1}{n-1} \sum_{j=2}^{n} \frac{g^{\parallel}\wedge \partial_{e_j}U}{r^2|\partial_{e_j}U|^2} \langle X,e_j\rangle.
\end{equation}
and thus with \eqref{eq:derivatives of U} and \eqref{eq:der r to t trans}
\begin{equation}
\begin{aligned}
\label{eq:Omegae1j}
 \Omega(x)[e_1]
 &=
 -\frac1r
 \frac{g^{\perp}\wedge \partial_t U}{|\partial_tU|^2}, \\
 \Omega(x)[e_j]
 &=
 \frac{1}{(n-1)ar}
 g^{\parallel}\wedge\widehat{e_j},
 \qquad j=2,\dots,n.
\end{aligned}
\end{equation}
According to \eqref{eq:decomp of nabla U n-2}, \eqref{eq:derivatives of U}, \eqref{eq:der r to t trans}
\begin{equation}
\label{eq:comp of rhs}
\Omega\cdot|\nabla U|^{n-2}\nabla U
= r^{-(n-2)}K
 \left[
 \Omega[e_1]\left(-\frac1r \partial_tU\right)
 +\sum_{j=2}^{n}
 \Omega[e_j]\left(\frac ar\widehat{e_j}\right)
 \right].
\end{equation}
Since \(g^{\perp}\perp \partial_t U\), with \eqref{eq:wedge-action}
\begin{equation}
 \Omega[e_1]\left(-\frac1r\partial_tU\right)
=\frac1{r^2}g^{\perp}.
\end{equation}
Furthermore, \(g^{\parallel}\perp\widehat{e_j}\), so
\begin{equation}
 \Omega[e_j]\left(\frac ar\widehat{e_j}\right)
= \frac1{(n-1)r^2}g^{\parallel}.
\end{equation}
Going back to \eqref{eq:comp of rhs} we have found
\begin{equation}
\Omega\cdot|\nabla U|^{n-2}\nabla U
= r^{-n}Kg,
\end{equation}
which according to Lemma \ref{lemma:n tension field} yields \eqref{eq:pointwiseOmega}.
For the upper bound, \eqref{eq:Omegae1j} implies
\[
|\Omega|
\le \frac Cr
\left(
 \frac{|g^{\perp}|}{|U_t|}
 +\frac{|g^{\parallel}|}{a}
 \right),
\]
and Lemma~\ref{lem:g-estimates} gives
\begin{equation}
 |\Omega|
 \leq \frac Cr t^{-1/2} = \frac{C}{|x| \log(1/|x|)^{1/2}}
\end{equation}
and therefore by the critical Lorentz-space (see \cite{Grafakos2014}, chapter 1.4) estimate
\begin{equation}
\Omega\in \bigcap_{q>2} L^{(n,q)}\bigl(B^n,so(n+2)\otimes\R^n\bigr).
\end{equation}
\end{proof}

\begin{proposition}\label{prop:weak}
Identity \eqref{eq:pointwiseOmega} holds in $\mathcal D'(B^n)$.  Equivalently,
for every test map
\[
 \varphi\in C_c^\infty(B^n,\R^{n+2}),
\]
one has
\begin{equation}\label{eq:weak}
 \int_{B^n}
 |\nabla U|^{n-2}\nabla U\cdot\nabla\varphi\,dx
 =
 \int_{B^n}
 \bigl(\Omega\cdot|\nabla U|^{n-2}\nabla U\bigr)
 \cdot\varphi\,dx.
\end{equation}
\end{proposition}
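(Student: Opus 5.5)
We pass from the pointwise identity of Proposition~\ref{prop:pointwiseOmega} to the distributional one by a standard capacity/cut-off argument at the single singular point. Fix $\varphi\in C_c^\infty(B^n,\R^{n+2})$ and a cut-off $\eta_\varepsilon(x)=\eta(|x|/\varepsilon)$ with $\eta\in C^\infty(\R)$, $\eta=0$ on $[0,1]$, $\eta=1$ on $[2,\infty)$. Then $\eta_\varepsilon\varphi$ is supported in $B^n\setminus\{0\}$, where $U$ is smooth. Multiplying \eqref{eq:pointwiseOmega} by $\eta_\varepsilon\varphi$ and integrating by parts (no boundary terms) gives
\begin{equation}
\int_{B^n}\eta_\varepsilon|\nabla U|^{n-2}\nabla U\cdot\nabla\varphi\,dx
+\int_{B^n}|\nabla U|^{n-2}\nabla U\cdot(\nabla\eta_\varepsilon\otimes\varphi)\,dx
=\int_{B^n}\eta_\varepsilon\bigl(\Omega\cdot|\nabla U|^{n-2}\nabla U\bigr)\cdot\varphi\,dx .
\end{equation}

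We then let $\varepsilon\to0$ and treat the three terms separately. For the two terms containing $\eta_\varepsilon$ we use dominated convergence, and this requires integrability on $B^n$. First, $|\nabla U|^{n-1}\in L^{n/(n-1)}\subset L^1$, since $\nabla U\in L^n$ by the bound $|\nabla U|\lesssim r^{-1}t^{-\beta}$ with $n\beta>1$. Second, $|\Omega||\nabla U|^{n-1}\in L^1$ by H\"older, because $\Omega\in L^n$ and $|\nabla U|^{n-1}\in L^{n/(n-1)}$. Alternatively one can check this directly: $|\Omega||\nabla U|^{n-1}\lesssim r^{-n}t^{-1/2-(n-1)\beta}$, and this is integrable since $\tfrac12+(n-1)\beta>1$ (recall $\beta>1/2$ and $n>2$). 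Hence these two terms converge to the two sides of \eqref{eq:weak}. Note also that $\Omega\in L^n$ follows from $|\Omega|\lesssim r^{-1}t^{-1/2}$ because $n/2>1$, which is exactly where $n>2$ enters.

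The main point is that the cut-off error term vanishes. Since $|\nabla\eta_\varepsilon|\lesssim\varepsilon^{-1}\chi_{\{\varepsilon<|x|<2\varepsilon\}}$, we get
\begin{equation}
\Bigl|\int|\nabla U|^{n-2}\nabla U\cdot(\nabla\eta_\varepsilon\otimes\varphi)\Bigr|
\lesssim\|\varphi\|_\infty\,\varepsilon^{-1}\int_{\varepsilon<|x|<2\varepsilon}r^{-(n-1)}t^{-(n-1)\beta}\,dx
\lesssim\|\varphi\|_\infty\,\bigl(T+\log(1/\varepsilon)\bigr)^{-(n-1)\beta}\to0 .
\end{equation}
A more robust alternative, which uses only $\nabla U\in L^n$, is H\"older:
\begin{equation}
\|\nabla\eta_\varepsilon\|_{L^n}\,\|\nabla U\|^{n-1}_{L^n(B_{2\varepsilon})},
\end{equation}
where $\|\nabla\eta_\varepsilon\|_{L^n}$ is bounded uniformly in $\varepsilon$ by scale invariance and $\|\nabla U\|_{L^n(B_{2\varepsilon})}\to0$ by absolute continuity of the integral. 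Either bound shows the boundary term disappears. Combining the three limits yields \eqref{eq:weak}.
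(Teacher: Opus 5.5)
Your proof is correct and follows the same route as the paper. The paper excises $B_\varepsilon$ and shows the boundary integral over $\partial B_\varepsilon$ vanishes using the decay $v(t_\varepsilon)^{n-1}\simeq t_\varepsilon^{-(n-1)\beta}$; your smooth cut-off $\eta_\varepsilon$ replaces that boundary integral by an annular one with the same decay, and your explicit integrability checks (left implicit in the paper) and H\"older variant (the capacity argument the paper alludes to) are sound additions.
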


\begin{proof}
This result follow immediately by the theory of Sobolev-capacity (see e.g. \cite{EvansGariepy2015}).
We give a simple proof here for completeness. 
Let $\varepsilon>0$, let $t_\varepsilon=t(\varepsilon)=T + \log\left(1/\varepsilon\right)$ and let $B_\varepsilon$ be the ball of radius $\varepsilon$ centered at the origin.
According to \eqref{eq:pointwiseOmega} we have
\begin{equation}
\label{eq:weakEpsilon}
 \int_{B^n\setminus B_\varepsilon}
 \bigl(\Omega\cdot|\nabla U|^{n-2}\nabla U\bigr)
 \cdot\varphi\,dx
=  \int_{B^n\setminus B_\varepsilon}
 |\nabla U|^{n-2}\nabla U\cdot\nabla\varphi\,dx
- \int_{\partial B_\varepsilon} |\nabla U|^{n-2}\nabla U\cdot\omega\ \varphi\ d\sigma
\end{equation}
We estimate with \eqref{eq:decomp of nabla U n-2}, \eqref{eq:K-exact}, \eqref{eq:partialtUequivtov}
\begin{equation}
\begin{aligned}
\abs{\int_{\partial B_\varepsilon} |\nabla U|^{n-2}\nabla U\cdot\omega\ \varphi\ d\sigma}
&\le C \varepsilon^{-n+2} K(t_\varepsilon)\ \varepsilon^{n-1} \norm{\partial_{r} U}_{L^\infty(\partial B_\varepsilon)} \norm{\varphi}_{L^\infty(B^n)} \\
&= C K(t_\varepsilon) \norm{\partial_{t} U}_{L^\infty(\partial B_\varepsilon)} \norm{\varphi}_{L^\infty(B^n)} \\
&\le C v(t_\varepsilon)^{n-1} \norm{\varphi}_{L^\infty(B^n)} \\
&\le C t_\varepsilon^{-(n-1)\beta} \norm{\varphi}_{L^\infty(B^n)} \to 0,
\end{aligned}
\end{equation}
as $\varepsilon\searrow0$.
This permits to pass to the limit $\varepsilon\searrow0$ in \eqref{eq:weakEpsilon} to show \eqref{eq:weak}. 
\end{proof}
This completes the proof of Theorem \ref{thm:main thm}.

%%%%%%%%%%%%%%%%%%%%%%%%%%%%%%%%%%%%%%%%%%%%%%%%%%%%%%%%%%%%%%%
% Bibliography Section
%%%%%%%%%%%%%%%%%%%%%%%%%%%%%%%%%%%%%%%%%%%%%%%%%%%%%%%%%%%%%%%

\end{document}